
\documentclass{amsart}
\usepackage{amsfonts, amsbsy, amsmath, amssymb}

\newtheorem{thm}{Theorem}[section]
\newtheorem{lem}[thm]{Lemma}

\newtheorem{prop}[thm]{Proposition}

\newtheorem{thm-con}[thm]{Theorem-Conjecture}
\numberwithin{equation}{section}

\theoremstyle{definition}

\allowdisplaybreaks

\newcommand{\f}{\Bbb F}


\begin{document}

\title{On a Class of Permutation Trinomials in Characteristic 2}

\author[Xiang-dong Hou]{Xiang-dong Hou}
\address{Department of Mathematics and Statistics,
University of South Florida, Tampa, FL 33620}
\email{xhou@usf.edu}

\keywords{finite field, Hasse-Weil bound, permutation polynomial}

\subjclass[2010]{11T06, 11T55, 14H05}

\begin{abstract}
Recently, Tu, Zeng, Li, and Helleseth considered trinomials of the form $f(X)=X+aX^{q(q-1)+1}+bX^{2(q-1)+1}\in\f_{q^2}[X]$, where $q$ is even and $a,b\in\f_{q^2}^*$. They found sufficient conditions on $a,b$ for $f$ to be a permutation polynomial (PP) of $\f_{q^2}$ and they conjectured that the sufficient conditions are also necessary. The conjecture has been confirmed by Bartoli using the Hasse-Weil bound. In this paper, we give an alternative solution to the question. We also use the Hasse-Weil bound, but in a different way. Moreover, the necessity and sufficiency of the conditions are proved by the same approach.
\end{abstract}

\maketitle

\section{Introduction}

Let $\f_q$ denote the finite field with $q$ elements. A polynomial $f\in\f_q[X]$ is called a {\em permutation polynomial} (PP) of $\f_q$ if it induces a permutation of $\f_q$. PPs of the form
\begin{equation}\label{1.1}
f(X)=X+aX^{s_1(q-1)+1}+bX^{s_2(q-1)+1}\in\f_{q^2}[X],\quad 1\le s_1,s_2\le q,\ s_1\ne s_2,
\end{equation}
have attracted much attention in recent years \cite{Gupta-Sharma-FFA-2016, Hou-CM-2015, Hou-FFA-2015a, Hou-FFA-2015b, Li-Helleseth-CC-2017, Li-Qu-Chen-FFA-2017, Wu-Yuan-Ding-Ma-FFA-2017, Zha-Hu-Fan-FFA-2017, Zieve-arXiv1310.0776}. Given $(s_1,s_2)$, finding conditions on $a,b$ that are necessary and sufficient for $f$ to be a PP of $\f_{q^2}$ is a difficult question that offers not only challenges but also fascination. The ``simplest'' cases with $(s_1,s_2)=(1,2)$ was solved a few years ago \cite{Hou-FFA-2015b}. In a recent paper \cite{Tu-Zeng-Li-Helleseth-FFA-2018}, Tu, Zeng, Li, and Helleseth considered the case $(s_1,s_2)=(q,2)$ with even $q$. Let 
\begin{equation}\label{1.2}
f(X)=X+aX^{q(q-1)+1}+bX^{2(q-1)+1}\in\f_{q^2}[X],
\end{equation}
where $q$ is even and $a,b\in\f_{q^2}^*$. They proved that $f$ is a PP of $\f_{q^2}$ if
\begin{equation}\label{1.3}
b(1+a^{q+1}+b^{q+1})+a^{2q}=0
\end{equation}
and
\begin{equation}\label{1.4}
\begin{cases}
\displaystyle\text{Tr}_{q/2}\Bigl(1+\frac 1{a^{q+1}}\Bigr)=0&\text{if}\ b^{q+1}=1,\vspace{0.4em}\cr
\displaystyle\text{Tr}_{q/2}\Bigl(\frac {b^{q+1}}{a^{q+1}}\Bigr)=0&\text{if}\ b^{q+1}\ne 1,
\end{cases}
\end{equation}
where $\text{Tr}_{q/2}$ is the trace from $\f_q$ to $\f_2$. Based on numerical experiments, the authors conjectured that the conditions \eqref{1.3} and \eqref{1.4} are also necessary for $f$ to be a PP of $\f_{q^2}$. The conjecture has been proved by Bartoli \cite{Bartoli-arXiv:1712.10017}. If $f$ is a PP of $\f_{q^2}$, it is well known that there is an associated rational function $F(X)\in\f_q(X)$ of degree $3$ which permutes $\f_q$. The Hasse-Weil bound implies that when $q$ is not too small, the numerator of $(F(X)-F(Y))/(X-Y)$ does not have absolutely irreducible factors in $\f_q[X]$. With computer assistance, \cite{Bartoli-arXiv:1712.10017} determined the necessary and sufficient conditions for the numerator of $(F(X)-F(Y))/(X-Y)$ not to have absolutely irreducible factors in $\f_q[X]$, and these conditions are equivalent to \eqref{1.3} and \eqref{1.4}.

In the present paper, we give a different proof for the results of  \cite{Bartoli-arXiv:1712.10017} and \cite{Tu-Zeng-Li-Helleseth-FFA-2018}. We also use the Hasse-Weil bound, but in a different way. Moreover, we prove the necessity and sufficiency of the conditions \eqref{1.3} and \eqref{1.4} at the same time. The paper is organized as follows. Section 2 contains some preliminary steps of the proof. We observe that after a simple substitution, we may assume that $b\in\f_q^*$. We also recall a few known results to be used later. In Section 3, we use the Hasse-Weil bound to show that when $q$ is not too small, $f$ is a PP of $\f_{q^2}$ essentially if and only if a certain polynomial in $\f_q[X,Y]$ factors in a prescribed manner; the factorization is impossible unless $a\in\f_q^*$. In this section, the reader will find that heavy computations can produce surprisingly simple results, a phenomenon, though mysterious, not uncommon in the study of PPs. Section 4 is a ``rerun'' of the computations in Section 3 under the assumption that $a\in\f_q^*$; the computational results confirm that the conditions \eqref{1.3} and \eqref{1.4} are necessary and sufficient for $f$ to be a PP of $\f_{q^2}$. Since we assume that $b\in\f_q^*$, the conditions \eqref{1.3} and \eqref{1.4} become simpler. The main result of the paper is the following 

\begin{thm}\label{T1.1}
Let $q$ be even and $f(X)=X+aX^{q(q-1)+1}+bX^{2(q-1)+1}$, where $a\in\f_{q^2}^*$ and $b\in\f_q^*$. Then $f$ is a PP of $\f_{q^2}$ if and only if 
\begin{itemize}
\item[(i)] $b=1$, $a\in\f_q^*$ and $\text{\rm Tr}_{q/2}(1+a^{-1})=0$, or
\item[(ii)] $b\ne 1$, $\text{\rm Tr}_{q/2}(b/(b+1))=0$ and $a^2=b(b+1)$.
\end{itemize}
\end{thm}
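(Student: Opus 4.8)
The plan is to follow the standard reduction of the permutation problem to a question about a rational function on $\f_q$, and then to attack that question directly via the Hasse--Weil bound, making sure that the same computation yields both directions of the equivalence. First I would use the substitution mentioned in the excerpt (Section 2) to assume without loss that $b\in\f_q^*$; this is already built into the statement of Theorem~\ref{T1.1}. Next I would recall the classical criterion (due to Park--Lee, Zieve, and others) that a polynomial of the shape \eqref{1.1} is a PP of $\f_{q^2}$ if and only if $\gcd(\cdot,q-1)$-type conditions hold together with an associated rational function $F(X)\in\f_q(X)$ of degree $3$ permuting $\mathbb{P}^1(\f_q)$ (or $\f_q$). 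Writing $f$ in the form $Xg(X^{q-1})$, one computes $g$ explicitly; here $g(X)=1+aX^{\,?}+bX^{\,?}$ with the exponents read off from $q(q-1)+1$ and $2(q-1)+1$, so $g(X)=1+aX^{q-2}+bX$ modulo the relevant reductions, and the associated $F$ is $F(X)=X g(X)^{q-1}/\cdots$ — in any case a degree-$3$ rational function whose coefficients are explicit rational expressions in $a,a^q,b$.

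The heart of the argument is then the following dichotomy. If $F$ is not a permutation of $\f_q$, then $(F(X)-F(Y))/(X-Y)$ has an $\f_q$-rational point with $X\ne Y$; equivalently, after clearing denominators, a certain bihomogeneous curve $C:\ \Phi(X,Y)=0$ in $\f_q[X,Y]$ has many $\f_q$-points. The key step is to show that, for $q$ larger than an explicit small bound, $\Phi$ having an absolutely irreducible factor over $\f_q$ already forces $F$ to fail to be a PP (by Hasse--Weil, such a factor has roughly $q$ points, far more than the few that can lie on the ``diagonal'' or at infinity). Conversely, if $\Phi$ has \emph{no} absolutely irreducible factor over $\f_q$, a Bézout/parity argument on the factorization of $\Phi$ over $\overline{\f_q}$ pins down exactly how $\Phi$ must factor — and this is precisely where the conditions in (i) and (ii) come from. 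So the plan is: (1) compute $\Phi$ explicitly (degree $2$ in each of $X,Y$ after removing the trivial factor, total degree $\le 4$); (2) determine, purely over $\overline{\f_q}$, when $\Phi$ fails to have an absolutely irreducible $\f_q$-factor; (3) translate that into polynomial identities in $a,a^q,b$; (4) solve those identities, which should collapse (``heavy computations producing surprisingly simple results'') to $a\in\f_q^*$ together with either $b=1$ or $a^2=b(b+1)$; (5) in those cases, verify directly (this is the content of Section 4's ``rerun'') that $F$ permutes $\f_q$ if and only if the relevant $\mathrm{Tr}_{q/2}$ condition holds, by counting roots of $F(X)=c$ or equivalently by an additive-character / Artin--Schreier computation. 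Handling the small-$q$ cases left uncovered by the Hasse--Weil estimate is done separately by direct computation.

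Because the excerpt says the necessity and sufficiency are proved ``by the same approach,'' I would organize step (2) as an exact classification rather than a one-directional sufficient condition: the statement ``$\Phi$ has no absolutely irreducible $\f_q$-factor'' should be shown equivalent to an explicit algebraic condition $\mathcal{E}(a,a^q,b)$ on the coefficients, and then separately ``$F$ is a PP of $\f_q$'' should be shown equivalent to ``$\Phi$ has no absolutely irreducible $\f_q$-factor, and for the finitely many non-absolutely-irreducible configurations the curve genuinely has no relevant $\f_q$-point,'' the latter being the trace condition. Carrying this out means one never has to argue ``the conditions are sufficient, and here is a different argument that they are necessary'' — both fall out of solving $\mathcal{E}$ and reading off the trace obstruction. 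I expect step (1) to be mechanical but bulky, step (5) to be a routine Artin--Schreier/trace computation, and the small-$q$ check to be a finite verification.

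The main obstacle, I expect, is step (2)–(3): showing that the \emph{only} way $\Phi$ avoids an absolutely irreducible $\f_q$-factor is the claimed algebraic condition, and doing so without computer assistance. A priori $\Phi$ of total degree $4$ could split over $\overline{\f_q}$ as $2+2$ (two conjugate absolutely irreducible conics), as $1\cdot 3$, as $1\cdot 1\cdot 2$, or as a product of lines; ruling out or precisely characterizing each pattern requires understanding the geometry of the specific curve — its singularities, its intersection with the diagonal $X=Y$, and its behavior at infinity — closely enough to force identities among $a,a^q,b$. The excerpt's remark that ``the factorization is impossible unless $a\in\f_q^*$'' signals that the conjugation $a\mapsto a^q$ is the lever: one presumably shows that a $2+2$ conjugate splitting forces the two conics to be Galois-conjugate, compares coefficients, and extracts $a^q$ in terms of $a,b$, which combined with $a^q\cdot a=a^{q+1}\in\f_q$ yields $a\in\f_q$ (and then the remaining relation $a^2=b(b+1)$ or $b=1$). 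Making that comparison-of-coefficients step clean — choosing good coordinates so the conics have few parameters — is the delicate part, and is where I would spend most of the effort.
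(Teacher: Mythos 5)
Your plan is not the paper's route: it is essentially Bartoli's approach, the one this paper explicitly sets itself apart from. You pass to the degree-$3$ rational function $F$ on $\f_q$ and analyse the numerator $\Phi(X,Y)$ of $(F(X)-F(Y))/(X-Y)$, classifying when it lacks an absolutely irreducible factor over $\f_q$. The paper instead converts ``for each $y$ there is a unique $x$'' into a statement about a cubic in $x$ having exactly one root in $\f_q$ and invokes Williams' criterion (Lemma~\ref{L2.2}); that trace condition is re-expressed as the solvability of $x^2+x=k+1+P(y)/Q(y)$ for every $y$, so Hasse--Weil and B\'ezout are applied not to $\Phi$ but to the Artin--Schreier-type curve $Q(Y)(X^2+X+k+1)+P(Y)=0$, which must carry at least $2(q-2)$ points and is therefore forced to split into two factors linear in $X$ over $\f_q(Y)$. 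Equating coefficients of the putative factor $D=D_2Y^2+D_1Y+D_0$ yields the system \eqref{3.29}--\eqref{3.33}; elimination first forces $a\in\f_q^*$ and then, in the rerun of Section~4, exactly conditions (i) and (ii). Because the splitting of a quadratic in $X$ over $\f_q(Y)$ is an exact equivalence (existence of $D$), necessity and sufficiency fall out of the same computation, the trace conditions appearing as the solvability of the single quadratic \eqref{3.34} for $D_1\in\f_q$.

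Beyond the difference in route, two places in your plan are genuine gaps rather than deferred routine work. First, your step (2)--(3) --- the exact classification of the factorization patterns of $\Phi$ over $\overline{\f_q}$ (conjugate conics, line times cubic, products of lines) and the extraction of identities in $a,a^q,b$ from each pattern --- is the entire mathematical content of the theorem in your framework, and you leave it as ``the delicate part''; in Bartoli's execution this step required computer assistance, and nothing in your outline indicates how the coefficient comparison becomes tractable. Second, your sufficiency direction is underdetermined: ``$\Phi$ has no absolutely irreducible $\f_q$-factor'' only bounds the number of off-diagonal $\f_q$-points of $\Phi$ via B\'ezout by $(\deg\Phi)^2$; it does not make that number zero, so you still owe an argument that in the retained configurations the conjugate components meet in no $\f_q$-point with $x\ne y$, and the appeal to ``a routine Artin--Schreier/trace computation'' does not identify what is to be computed. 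The paper avoids both issues because its sufficiency claim reduces to exhibiting $D$, i.e., to the solvability of one quadratic over $\f_q$, with the exceptional roots handled directly in Claims 1 and 2.1.
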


We leave it for the reader to verify that under the assumption $b\in\f_q^*$, the intersection of \eqref{1.3} and \eqref{1.4} is equivalent to the union of (i) and (ii).

The computations in the paper require no specialized algorithms and the results can be easily verified with computer assistance. The proof produces and uses some lengthy expressions at various stages; these expressions are given in the appendix.

\section{Preliminaries}

From now on, $q$ is even and 
\begin{equation}\label{2.1}
f=X(1+aX^{q(q-1)}+bX^{2(q-1)})\in\f_{q^2}[X],
\end{equation}
where $a,b\in\f_{q^2}^*$. Let $\beta\in\f_{q^2}$ be such that $\beta^4=b$. Then
\[
f(\beta X)=\beta X(1+a\beta^{1-q}X^{q(1-q)}+\beta^{2(q+1)}X^{2(q-1)}),
\]
where $\beta^{2(q+1)}\in\f_q^*$. Since $f(X)$ is a PP of $\f_{q^2}$ if and only if $f(\beta X)$ is, we may assume that $b\in\f_q^*$ in $f(X)$.

Let $\mu_{q+1}=\{x\in\f_{q^2}^*:x^{q+1}=1\}$. It is well known that $f$ is a PP of $\f_{q^2}$ if and only if $h(X)=X(1+aX^q+bX^2)^{q-1}$ permutes $\mu_{q+1}$ \cite{Park-Lee-BAMS-2001, Wang-LNCS-2007, Zieve-PAMS-2009}. For $x\in\mu_{q+1}$ with $1+ax^q+bx^2\ne 0$, i.e., $bx^3+x+a\ne 0$, we have
\[
h(x)=\frac{x(1+ax^q+bx^2)^q}{1+ax^q+bx^2}=\frac{a^qx^3+x^2+b}{bx^3+x+a}.
\]
Hence $h(X)$ permutes $\mu_{q+1}$ if and only if $bx^3+x+a\ne 0$ for all $x\in\mu_{q+1}$ and 
\[
g(X)=\frac{a^qX^3+X^2+b}{bX^3+X+a}\in\f_{q^2}(X)
\]
permutes $\mu_{q+1}$.

Assume that $bX^3+X+a$ has no root in $\mu_{q+1}$. Choose $k\in\f_q$ such that $\text{Tr}_{q/2}(k)=1$ and let $z\in\f_{q^2}$ be such that 
\begin{equation}\label{2.2}
z^2+z+k=0.
\end{equation}
Then $z+z^q=1$ and $z^{q+1}=k$. The rational function $\phi(X)=(X+z^q)/(X+z)$ maps $\f_q\cup\{\infty\}$ to $\mu_{q+1}$ bijectively with $\phi(\infty)=1$ and $g(1)=(1+a+b)^{q-1}$. Hence $g(X)$ permutes $\mu_{q+1}$ if and only if $\psi^{-1}\circ g\circ\phi$ permutes $\f_q$, where $\psi(X)=(1+a+b)^{q-1}\phi(X)$, i.e., if and only if for each $y\in\f_q$, there is a unique $x\in\f_q$ such that $g\circ\phi(x)=(1+a+b)^{q-1}\phi(y)$. To summarize, we have the following proposition.

\begin{prop}\label{P2.1}
$f$ is a PP of $\f_{q^2}$ if and only if 
\begin{itemize}
\item[(i)] $bX^3+X+a$ has no root in $\mu_{q+1}$, and
\item[(ii)] for each $y\in\f_q$, there is a unique $x\in\f_q$ such that
\begin{equation}\label{2.3}
g\Bigl(\frac{x+z^q}{x+z}\Bigr)=(1+a+b)^{q-1}\frac{y+z^q}{y+z}.
\end{equation}
\end{itemize}
\end{prop}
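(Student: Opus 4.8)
The plan is to assemble the reductions prepared in this section into a single chain of equivalences, taking as known the standard fact (cited above) that $f$ is a PP of $\f_{q^2}$ if and only if $h(X)=X(1+aX^q+bX^2)^{q-1}$ permutes $\mu_{q+1}$. It then suffices to prove that $h$ permutes $\mu_{q+1}$ if and only if (i) and (ii) hold, and I would do this in two links: first pass from $h$ to the rational function $g$, and then transport the problem from $\mu_{q+1}$ to $\f_q$ via the bijection $\phi$.

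For the first link I would argue that $h$ permutes $\mu_{q+1}$ iff condition (i) holds \emph{and} $g$ permutes $\mu_{q+1}$. For $x\in\mu_{q+1}$ one has $x^q=x^{-1}$, so $1+ax^q+bx^2=(bx^3+x+a)/x$; thus a root of $bX^3+X+a$ in $\mu_{q+1}$ is exactly a point where $1+ax^q+bx^2$ vanishes. At such a point $h(x)=x\cdot 0^{q-1}=0\notin\mu_{q+1}$, so if (i) fails then $h$ does not even map $\mu_{q+1}$ into itself and cannot permute it. Conversely, when (i) holds, the computation already displayed shows $h(x)=g(x)$ for every $x\in\mu_{q+1}$; moreover $h(x)=x\,u^{q-1}$ with $u=1+ax^q+bx^2\in\f_{q^2}^*$, and since $(u^{q-1})^{q+1}=1$ we get $h(x)\in\mu_{q+1}$. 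Hence under (i) the map $g$ is a well-defined self-map of $\mu_{q+1}$ agreeing with $h$, and the two permute $\mu_{q+1}$ simultaneously.

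For the second link I would use that $\phi(X)=(X+z^q)/(X+z)$ and $\psi(X)=(1+a+b)^{q-1}\phi(X)$ are bijections from $\f_q\cup\{\infty\}$ onto $\mu_{q+1}$ (the latter because multiplication by $(1+a+b)^{q-1}\in\mu_{q+1}$ permutes $\mu_{q+1}$, and (i) guarantees $1+a+b\ne0$ so this factor is nonzero). Thus $g$ permutes $\mu_{q+1}$ iff the self-map $\psi^{-1}\circ g\circ\phi$ of $\f_q\cup\{\infty\}$ is a bijection. The key observation is that this conjugated map fixes $\infty$: since $\phi(\infty)=1$ and $g(1)=(1+a^q+b)/(1+a+b)=(1+a+b)^{q-1}=\psi(\infty)$, we get $(\psi^{-1}\circ g\circ\phi)(\infty)=\infty$. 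A self-bijection of $\f_q\cup\{\infty\}$ that fixes $\infty$ is a bijection iff its restriction to $\f_q$ is, and unwinding the definitions this restriction is a bijection of $\f_q$ precisely when for each $y\in\f_q$ there is a unique $x\in\f_q$ with $g(\phi(x))=\psi(y)=(1+a+b)^{q-1}(y+z^q)/(y+z)$, which is condition (ii). Chaining the two links yields the proposition.

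The main obstacle I anticipate is not any single hard computation but the careful bookkeeping at the distinguished point $\infty$: one must check that $\phi$ sends $\infty$ to $1$, that $1$ is not a pole of $g$ (equivalently $1+a+b\ne 0$, which is exactly the instance of (i) at $x=1$), and that $g(1)$ coincides with $\psi(\infty)$ so that the conjugated map genuinely restricts to a self-map of $\f_q$. Once this is in place, the reduction to a statement purely about $\f_q$---and hence to the form in which the Hasse-Weil bound will later be applied---follows cleanly.
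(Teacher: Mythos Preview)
Your proposal is correct and follows essentially the same route as the paper: the paragraph preceding Proposition~\ref{P2.1} is the paper's proof, and it reduces $f$ being a PP to $h$ permuting $\mu_{q+1}$, then to $g$ permuting $\mu_{q+1}$ under condition~(i), and finally conjugates by $\phi$ and $\psi$ to land on~(ii). Your write-up is in fact more explicit than the paper's on the necessity of~(i), the nonvanishing of $1+a+b$, and the handling of $\infty$, but the architecture is identical.
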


We will also need the following result.

\begin{lem}[\cite{Williams-JNT-1975}]\label{L2.2} 
Let $\alpha,\beta\in\f_{2^n}$, $\beta\ne 0$. The polynomial $X^3+\alpha X+\beta$ has exactly one root in $\f_{2^n}$ if and only if $\text{\rm Tr}_{2^n/2}(1+\alpha^3\beta^{-2})=1$.
\end{lem}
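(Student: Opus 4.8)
The plan is to recognize ``exactly one root in $\f_{2^n}$'' as a statement about the parity of the Frobenius permutation of the roots, and then to convert that parity condition into the stated trace equation by means of a resolvent quadratic. First I would observe that $X^3+\alpha X+\beta$ is separable whenever $\beta\ne 0$: its formal derivative is $X^2+\alpha$, whose only root in $\overline{\f_{2^n}}$ is $\sqrt\alpha$, and substituting $X=\sqrt\alpha$ into the cubic gives $\alpha^{3/2}+\alpha^{3/2}+\beta=\beta\ne 0$; hence the cubic and its derivative are coprime and the three roots $r_1,r_2,r_3\in\overline{\f_{2^n}}$ are distinct. Consequently the factorization type over $\f_{2^n}$ is governed by the cycle type of the Frobenius $\sigma\colon x\mapsto x^{2^n}$ acting on $\{r_1,r_2,r_3\}$: three fixed points yield three roots, a $3$-cycle yields an irreducible cubic with no root, and a transposition yields exactly one root. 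Thus ``exactly one root'' holds if and only if $\sigma$ acts as an odd permutation of the roots.

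Next I would build a resolvent detecting this parity. Set $u=r_1^2r_2+r_2^2r_3+r_3^2r_1$ and $v=r_1r_2^2+r_2r_3^2+r_3r_1^2$. A direct check shows that $3$-cycles fix each of $u,v$ while transpositions interchange them, so $\{u,v\}$ is stable under all of $S_3$ and $u,v$ are the roots of a quadratic $T^2+(u+v)T+uv$ with coefficients in $\f_{2^n}$. Writing $e_1=0$, $e_2=\alpha$, $e_3=\beta$ for the elementary symmetric functions of $r_1,r_2,r_3$, Newton's identities give $u+v=e_1e_2-3e_3=\beta$ and, after a short symmetric-function computation, $uv=\alpha^3+\beta^2$. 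Since $u+v=\beta\ne 0$ we get $u\ne v$, so $\sigma$ fixes $u$ (equivalently $v$) precisely when $\sigma$ is even; hence the cubic has exactly one root if and only if $u\notin\f_{2^n}$, i.e. if and only if the resolvent quadratic $T^2+\beta T+uv$ is irreducible over $\f_{2^n}$.

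Finally I would solve the resolvent. Because $\beta\ne 0$, the substitution $T=\beta S$ turns $T^2+\beta T+uv=0$ into the Artin--Schreier equation $S^2+S+uv\beta^{-2}=0$, which is irreducible over $\f_{2^n}$ exactly when $\text{\rm Tr}_{2^n/2}(uv\beta^{-2})=1$. Substituting $uv=\alpha^3+\beta^2$ gives $uv\beta^{-2}=1+\alpha^3\beta^{-2}$, so irreducibility is equivalent to $\text{\rm Tr}_{2^n/2}(1+\alpha^3\beta^{-2})=1$, which is the claimed criterion (the degenerate case $\alpha=0$ is absorbed automatically, since then $uv=\beta^2$ and the trace is $\text{\rm Tr}_{2^n/2}(1)$). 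I expect the main obstacle to be purely computational, namely verifying $uv=\alpha^3+\beta^2$ cleanly while keeping careful track of the characteristic-$2$ simplifications $3=1$ and $e_1=0$; the conceptual crux is the parity correspondence between ``one root'' and ``Frobenius acts as a transposition,'' together with the observation that $u+v=\beta\ne 0$ forces $u\ne v$, so that a transposition genuinely moves $u$ out of $\f_{2^n}$.
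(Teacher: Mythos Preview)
The paper does not prove Lemma~\ref{L2.2}; it simply quotes the result from Williams \cite{Williams-JNT-1975}. So there is no in-paper argument to compare against.

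Your proof is correct and self-contained. The separability check, the identification of ``exactly one root'' with ``Frobenius acts as a transposition on the root set,'' and the use of the resolvent pair $u=r_1^2r_2+r_2^2r_3+r_3^2r_1$, $v=r_1r_2^2+r_2r_3^2+r_3r_1^2$ are all sound; the key identities $u+v=e_1e_2-3e_3=\beta$ and $uv=\alpha^3+\beta^2$ (the Berlekamp discriminant of the depressed cubic in characteristic $2$) are exactly what is needed, and the passage to the Artin--Schreier criterion $\text{Tr}_{2^n/2}(uv\beta^{-2})=1$ is routine. The observation $u+v=\beta\ne 0\Rightarrow u\ne v$ is the right way to rule out the degenerate possibility that a transposition might accidentally fix $u$.

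For context, Williams' original argument proceeds more concretely: he treats $\alpha=0$ separately (where $X^3+\beta$ has a unique root precisely when $n$ is odd, matching $\text{Tr}_{2^n/2}(1)=1$) and, for $\alpha\ne0$, makes the substitution $X=\alpha^{1/2}(w+w^{-1})$ to reduce the cubic to a quadratic in $w^3$, then reads off the root count from that quadratic. Your Galois-theoretic route via the resolvent is cleaner and more conceptual, and it handles the case $\alpha=0$ uniformly; Williams' method has the minor advantage of actually exhibiting the root when it exists. Either way, what you have written is a complete proof of the lemma.
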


\section{Necessity that $a\in\f_q^*$}

In this section we prove the following claim.

\begin{prop}\label{P3.1}
If $f$ is a PP of $\f_{q^2}$ and $q\ge 2^6$, then $a\in\f_q^*$.
\end{prop}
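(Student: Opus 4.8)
The plan is to use Proposition 2.1 together with the Hasse--Weil bound in the following way. Assume $f$ is a PP of $\f_{q^2}$. By Proposition 2.1(ii), for each $y\in\f_q$ there is exactly one $x\in\f_q$ solving \eqref{2.3}. Clearing denominators in \eqref{2.3} turns it into a polynomial equation $G(x,y)=0$ with $G\in\f_{q^2}[X,Y]$; since $g$ has numerator and denominator of degree $3$ and $\phi$ is a degree-one map, $G$ will be (after removing any spurious common factors coming from poles) a bicubic polynomial, i.e. of degree $3$ in each of $X$ and $Y$. The uniqueness-of-solution condition says precisely that the plane curve $G(x,y)=0$ has exactly one $\f_q$-point on each vertical line $x=\text{const}$ (allowing for the behaviour at infinity and at the excluded pole $y=z$, which I would handle separately as $O(1)$ corrections). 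This forces the number of affine $\f_q$-points of the curve $\mathcal C:G=0$ to be $q+O(1)$.

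Next I would split off the ``diagonal'' part. Because $g\circ\phi$ is a rational function of $x$, the curve $G(x,y)=0$ visibly contains the graph $y = $ (the point mapping to $g\circ\phi(x)$), which contributes a component; more precisely, the polynomial $G(X,Y)$ should be divisible by a factor $L(X,Y)$ of low degree corresponding to ``$x$ and $y$ give the same value,'' and I would write $G = L\cdot H$ with $H\in\f_{q^2}[X,Y]$ the genuinely interesting cofactor, of degree roughly $2$ in each variable. The point-count constraint then becomes: $H(x,y)=0$ has essentially no $\f_q$-points off the locus already accounted for. If $H$ (or one of its factors over $\overline{\f_q}$) were absolutely irreducible and defined over $\f_q$, the Hasse--Weil bound would give at least $q+1-2\mathfrak{g}\sqrt q - C$ rational points for an explicit genus $\mathfrak{g}$ and constant $C$ depending only on the (bounded) degree of $H$; for $q\ge 2^6$ this contradicts the $O(1)$ bound just obtained. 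Hence $H$ has \emph{no} absolutely irreducible factor defined over $\f_q$.

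The final step is to extract the arithmetic conclusion $a\in\f_q^*$ from this factorization constraint. A bicubic (or biquadratic) polynomial over $\f_{q^2}$ with no absolutely irreducible $\f_q$-rational factor must factor over $\overline{\f_q}$ into pieces that are permuted nontrivially by the Frobenius $t\mapsto t^q$; writing down the possible factorization types (into linear and low-degree factors, Galois-conjugate in pairs) and imposing that the product lies in $\f_q[X,Y]$ rather than merely $\f_{q^2}[X,Y]$ yields polynomial identities among the coefficients of $H$, which are themselves explicit polynomials in $a,a^q,b$ (with $b\in\f_q^*$). I expect these identities to collapse — this is the ``heavy computations producing surprisingly simple results'' the introduction advertises — to the single condition that $a$ be fixed by Frobenius, i.e. $a^q=a$, hence $a\in\f_q^*$.

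The main obstacle, and where I would spend most of the effort, is the middle step: identifying the correct low-degree cofactor $H$ after clearing denominators and substituting $\phi$, keeping careful track of the excluded points (the root situation for $bX^3+X+a$ on $\mu_{q+1}$, the pole $y=z$, and points at infinity) so that the point count is genuinely $q+O(1)$ with an \emph{explicit} $O(1)$; and then enumerating the $\overline{\f_q}$-factorization types of $H$ and checking \emph{each} against ``no absolutely irreducible $\f_q$-factor'' to be sure the only surviving case delivers $a^q=a$. The Hasse--Weil input itself is routine once $H$ is in hand and its degree (hence a crude genus bound) is known; it is the bookkeeping around $H$ and the case analysis of its factorizations that carry the real weight.
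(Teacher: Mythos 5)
Your overall strategy is recognizably that of Bartoli's proof, not the paper's, and as outlined it has two genuine gaps. First, the curve you build is the wrong one. Clearing denominators in \eqref{2.3} gives a polynomial of degree $3$ in $X$ but only degree $1$ in $Y$ (because $\psi$ is a degree-one map of $y$): it is exactly the paper's $C_3X^3+C_2(Y)X^2+C_1(Y)X+C_0(Y)$, i.e.\ the (irreducible) graph of the degree-$3$ map $x\mapsto\psi^{-1}(g(\phi(x)))$. It has roughly $q$ points over $\f_q$ whether or not $f$ is a PP, and it contains no ``diagonal'' component to split off, so the step $G=L\cdot H$ with $H$ having $O(1)$ points does not exist for this $G$. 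To make your plan work you must instead take two copies of the \emph{source} variable and form the numerator of $F(X)-F(Y)$ for $F=\psi^{-1}\circ g\circ\phi$; that polynomial does contain $X-Y$, and its bidegree-$(2,2)$ cofactor is the object with no off-diagonal $\f_q$-points. Second, even after this correction, the entire content of the proposition sits in your final step, which you leave as ``I expect these identities to collapse to $a^q=a$.'' Enumerating the $\overline\f_q$-factorization types of the cofactor and extracting the condition on $a$ is precisely where the real work lies (Bartoli needed computer assistance for it), and nothing in your outline makes that elimination tractable.

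For contrast, the paper proceeds differently and you may want to note why. It keeps the degree-$(3,1)$ cubic in $x$ and applies Williams' criterion (Lemma~\ref{L2.2}) for a cubic over a field of characteristic $2$ to have exactly one root; uniqueness of $x$ for every $y$ then becomes a trace identity in $y$, which says that the Artin--Schreier curve $X^2+X=k+1+P(Y)/Q(Y)$ has at least $2(q-2)$ points. Here Hasse--Weil is used as an \emph{upper} bound: a curve of genus at most $3$ cannot have that many points, so $F(X,Y)$ must be reducible, and B\'ezout forces the two linear-in-$X$ factors to lie in $\f_q[X,Y]$, yielding the explicit identity \eqref{3.16}. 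The coefficient equations \eqref{3.29}--\eqref{3.33}, combined with the special substitution $z=a/(a+a^q)$ (so that $a=a_1z$ with $a_1\in\f_q^*$ and $k=z^2+z$ has trace $1$), are then eliminated by hand down to a contradiction with $\text{Tr}_{q/2}(k)=1$. That substitution is what replaces your case analysis of factorization types; without it, or without Bartoli's computer search, the last step of your proposal is not a proof.
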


\begin{proof} 
Assume to the contrary that $a\in\f_{q^2}\setminus\f_q$. We will eventually arrive at a contradiction.

\medskip
$1^\circ$ Let $z=a/(a+a^q)\in\f_{q^2}\setminus\f_q$. Then $z^2+z=k$, where $k=a^{q+1}/(a+a^q)^2\in\f_q$. Write $a=a_1z$, where $a_1=a+a^q\in\f_q^*$. We have
\begin{equation}\label{3.1}
g\Bigl(\frac{X+z^q}{X+z}\Bigr)=g\Bigl(\frac{X+z+1}{X+z}\Bigr)=\frac{A(X)}{B(X)},
\end{equation}
where
\begin{align}\label{3.2}
A(X)=\,&X^3 (1+a_1+b+a_1 z)+X^2 (a_1+a_1 k+z+a_1z+b z)\cr
&+X (1+a_1+k+b k+z+a_1 z+b z+a_1 k z)\cr
&+a_1+k+a_1 k+b k+a_1 k^2+a_1 z+b z+k z+b k z
\end{align}
and
\begin{align}\label{3.3}
B(X)=\,&X^3 (1+b+a_1 z)+X^2 (1+b+a_1 k+z+a_1 z+b z)\cr
&+X (b+k+a_1 k+b k+z+a_1 z+b z+a_1 k z)\cr
&+b+a_1 k+a_1 k^2+a_1 z+b z+k z+b k z.
\end{align}
By Proposition~\ref{P2.1}, $B(X)$ has no root in $\f_q$, and for each $y\in\f_q$, there is a unique $x\in\f_q$ such that
\begin{equation}\label{3.4}
\frac{A(x)}{B(x)}=\frac {1+a_1+b+a_1z}{1+b+a_1z}\cdot\frac{y+z+1}{y+z}.
\end{equation}
Write \eqref{3.4} as
\begin{equation}\label{3.5}
C_3x^3+C_2(y)x^2+C_1(y)x+C_0(y)=0,
\end{equation}
where
\begin{gather}
C_3=1+a_1+a_1 b+b^2+a_1^2 k, \label{3.6} \\
C_2(Y)=1+a_1+a_1 b+b^2+(1+b^2+a_1^2 k )Y, \label{3.7} \\
C_1(Y)=b+a_1 b+b^2+k+a_1 k+a_1 b k+b^2 k+a_1^2 k^2+(1+a_1+b^2+a_1^2 k)Y, \label{3.8} \\
C_0(Y)=b+a_1 b+b^2+a_1 k+a_1^2 k^2+(a_1+b+b^2+k +a_1^2 k +b^2 k +a_1^2 k^2 )Y. \label{3.9} 
\end{gather}
We claim that $C_3\ne 0$. Otherwise,
\[
k=\Bigl(\frac{1+b}{a_1}\Bigr)^2+\frac{1+b}{a_1},
\]
which is impossible since $\text{Tr}_{q/2}(k)=1$. We write $C_i=C_i(Y)\in\f_q[Y]$ and $c_i=C_i(y)$. Now \eqref{3.5} becomes
\[
x^3+\frac{c_2}{c_3}x^2+\frac{c_1}{c_3}x+\frac{c_0}{c_3}=0,
\]
i.e.,
\begin{equation}\label{3.10}
{x'}^3+\frac{c_2^2+c_1c_3}{c_3^2}x'+\frac{c_1c_2+c_0c_3}{c_3^2}=0,
\end{equation}
where $x'=x+c_2/c_3$. By \eqref{3.10} and Lemma~\ref{L2.2}, for each $y\in\f_q$, we have 
\begin{itemize}
\item[(i)] $c_1c_2+c_0c_3=c_2^2+c_1c_3=0$, or
\item[(ii)] $c_1c_2+c_0c_3\ne 0$ and
\begin{equation}\label{3.11}
\text{Tr}_{q/2}\Bigl(1+\frac{(c_2^2+c_1c_3)^3}{c_3^2(c_1c_2+c_0c_3)^2}\Bigr)=1.
\end{equation}
\end{itemize}

\medskip
$2^\circ$ For each $y\in\f_q$ with $(C_1C_2+C_0C_3)(y)\ne 0$, by \eqref{3.11} we have
\begin{align*}
1\,&=\text{Tr}_{q/2}\Bigl(1+\frac{(c_2^2+c_1c_3)^3}{c_3^2(c_1c_2+c_0c_3)^2}\Bigr)\cr
&=\text{Tr}_{q/2}\Bigl(1+\frac{c_2^6+c_2^2c_1^2c_3^2+c_2^4c_1c_3+c_1^3c_3^3}{c_3^2(c_1c_2+c_0c_3)^2}\Bigr)\cr
&=\text{Tr}_{q/2}\Bigl(1+\frac{c_2^3+c_1c_2c_3}{c_3(c_1c_2+c_0c_3)}+\frac{c_1c_2^4+c_1^3c_3^2}{c_3(c_1c_2+c_0c_3)^2}\Bigr)\cr
&=\text{Tr}_{q/2}\Bigl(1+\frac{(c_2^3+c_1c_2c_3)(c_1c_2+c_0c_3)+c_1c_2^4+c_1^3c_3^2}{c_3(c_1c_2+c_0c_3)^2}\Bigr)\cr
&=\text{Tr}_{q/2}\Bigl(1+\frac{(c_1^2+c_0c_2)(c_2^2+c_1c_3)}{(c_1c_2+c_0c_3)^2}\Bigr).
\end{align*}
Hence there a precisely two $x\in\f_q$ such that 
\begin{equation}\label{3.12}
x^2+x=k+1+\frac{(c_1^2+c_0c_2)(c_2^2+c_1c_3)}{(c_1c_2+c_0c_3)^2}.
\end{equation}
Write
\[
\frac{(C_1^2+C_0C_2)(C_2^2+C_1C_3)}{(C_1C_2+C_0C_3)^2}=\frac PQ,
\]
where $P,Q\in\f_q[Y]$ and $\text{gcd}(P,Q)=1$. Let
\begin{equation}\label{3.13}
F(X,Y)=Q(Y)(X^2+X+k+1)+P(Y)\in\f_q[X,Y]
\end{equation}
and
\begin{equation}\label{3.14}
V_{\f_q^2}(F)=\{(x,y)\in\f_q^2:F(x,y)=0\}.
\end{equation}
It is clear that all points on the curve $V_{\f_q^2}(F)$ are smooth, and by \eqref{3.12},
\begin{equation}\label{3.15}
|V_{\f_q^2}(F)|\ge 2(q-2).
\end{equation}

\medskip
$3^\circ$ We claim that $F(X,Y)$ is not irreducible over $\overline\f_q$. Otherwise, let ${\tt x}, {\tt y}$ be transcendentals over $\f_q$ satisfying $F({\tt x},{\tt y})=0$. By Riemann's inequality \cite[III.10.4]{Stichtenoth-1993}, the functional field $\f_q({\tt x},{\tt y})/\f_q$ has genus 
\[
g\le([\f_q({\tt x},{\tt y}):\f_q({\tt x})]-1)([\f_q({\tt x},{\tt y}):\f_q({\tt y})]-1)\le(4-1)(2-1)=3. 
\]
Then by the Hasse-Weil bound \cite[V.2.3]{Stichtenoth-1993},
\[
|V_{\f_q^2}(F)|\le q+1+2gq^{1/2}\le q+1+6q^{1/2}<2(q-2),
\]
which is a contradiction to \eqref{3.15}.

Now we can write $F=eG_1G_2$, where $e\in\f_q^*$, $G_1,G_2\in\overline\f_q[X,Y]$ are irreducible and monic in some term order and $\deg_XG_1=\deg_XG_2=1$. If $G_1\notin\f_q[X,Y]$, choose $\sigma\in\text{Aut}(\overline\f_q/\f_q)$ such that $\sigma G_1\ne G_1$. Then $\sigma G_1=G_2$ and hence
\[
V_{\f_q^2}(F)\subset V_{\f_q^2}(G_1)\cap V_{\f_q^2}(\sigma G_1).
\]
By B\'ezout's theorem,
\[
|V_{\f_q^2}(F)|\le |V_{\f_q^2}(G_1)\cap V_{\f_q^2}(\sigma G_1)|\le(\deg G_1)^2\le 4,
\]
which is a contradiction to \eqref{3.15}. Therefore $G_1,G_2\in\f_q[X,Y]$. Thus
\[
\frac{F(X,Y)}{Q(Y)}=X^2+X+k+1+\frac{(C_1^2+C_0C_2)(C_2^2+C_1C_3)}{(C_1C_2+C_0C_3)^2}
\]
is a product of two linear polynomials in $X$ over $\f_q(Y)$, namely,
\begin{align}\label{3.16}
& X^2+X+k+1+\frac{(C_1^2+C_0C_2)(C_2^2+C_1C_3)}{(C_1C_2+C_0C_3)^2}\\
=\,&\Bigl(X+\frac D{C_1C_2+C_0C_3}\Bigr)\Bigl(X+1+\frac D{C_1C_2+C_0C_3}\Bigr) \nonumber
\end{align}
for some $D\in\f_q[Y]$.

\medskip
$4^\circ$ Equation~\eqref{3.16} is equivalent to 
\begin{equation}\label{3.17}
D(D+C_1C_2+C_0C_3)=(k+1)(C_1C_2+C_0C_3)^2+(C_1^2+C_0C_2)(C_2^2+C_1C_3).
\end{equation}
Write 
\begin{gather}
D=D_2Y^2+D_1Y+D_0, \label{3.18}\\
C_1C_2+C_0C_3=E_2Y^2+E_1Y+E_0, \label{3.19}\\
(k+1)(C_1C_2+C_0C_3)^2+(C_1^2+C_0C_2)(C_2^2+C_1C_3)=F_4Y^4+\cdots+F_0,\label{3.20}
\end{gather}
where 
\begin{align}
E_2\,&=1+a_1+a_1 b^2+b^4+a_1^3 k+a_1^4 k^2,\label{3.21} \\
E_1\,&=1+a_1+a_1 b+a_1 b^2+a_1 b^3+b^4+a_1^3 k+a_1^3 b k+a_1^4 k^2, \label{3.22} \\
E_0\,&=k+a_1 k+a_1^3 b k+a_1 b^2 k+b^4 k+a_1^3 k^2+a_1^4 k^3, \label{3.23} 
\end{align}
and $F_4,\dots,F_0$ are given in Appendix \eqref{s3-F4} -- \eqref{s3-F0}. Comparing the coefficients of $Y^i$, $0\le i\le 4$, in \eqref{3.17} gives
\begin{align}
&D_2^2+E_2D_2=F_4,\label{3.29}\\
&E_1D_2+E_2D_1=F_2,\label{3.30}\\
&E_0D_2+D_1^2+E_1D_1+E_2D_0=F_2,\label{3.31}\\
&E_0D_1+E_1D_0=F_1,\label{3.32}\\
&D_0^2+E_0D_0=F_0.\label{3.33}
\end{align}
Using \eqref{3.30} and \eqref{3.32} in \eqref{3.29}, \eqref{3.31} and \eqref{3.33}, we obtain the following equations in $D_1$:
\begin{align}
&E_2^2D_1^2+E_1E_2^2D_1+F_3^2+E_1E_2F_3+E_1^2F_4=0,\label{3.34}\\
&E_1D_1^2+E_1^2D_1^2+E_0F_3+E_2F_1+E_1F_2=0,\label{3.35}\\
&E_0^2D_1^2+E_0^2E_1D_1+F_1^2+E_0E_1F_1+E_1^2F_0=0.\label{3.36}
\end{align}
Eliminating $D_1$ in the above gives
\begin{align}
&E_1(F_3^2+E_1E_2F_3+E_1^2F_4)+E_2^2(E_0F_3+E_2F_1+E_1F_2)=0,\label{3.37}\\
&E_1(F_1^2+E_0E_1F_1+E_1^2F_0)+E_0^2(E_0F_3+E_2F_1+E_1F_2)=0.\label{3.38}
\end{align}
By \eqref{3.21} -- \eqref{3.23} and \eqref{s3-F4} -- \eqref{s3-F0},
\begin{align}
\text{the left side of \eqref{3.37}}\,&=a_1^2(1+a_1+a_1b+b^2+a_1^2k)^3h_1,\label{3.39} \\
\text{the left side of \eqref{3.38}}\,&=(1+a_1+a_1b+b^2+a_1^2k)^3h_2,\label{3.40}
\end{align}
where $h_1$ and $h_2$ are given in Appendix \eqref{s3-h1} and \eqref{s3-h2}.
Note that $1+a_1+a_1b+b^2+a_1^2k\ne 0$ since $\text{Tr}_{q/2}(k)=1$. Hence $h_1=0$ and $h_2=0$. Using suitable combinations of $h_1$ and $h_2$ to reduce the degree in $k$, we arrive at the following equations:
\begin{align}\label{3.43}
&a_1^4(1+a_1^2+b+b^2+b^3)^2k+a_1^2(1+a_1^4+a_1^4b^2+a_1^2b^3+a_1^2b^7+b^8)\\
=\,&(a_1^2k^2+a_1^2bk+b+b^3)h_1+h_2=0, \nonumber
\end{align}
\begin{equation}\label{3.44}
a_1^8b^3(1+b)^4(1+a_1^2+a_1b+a_1b^2+b^4)^4=d_1h_1+d_2h_2=0,
\end{equation}
where $d_1,d_2$ are given in Appendix \eqref{s3-d1} and \eqref{s3-d2}. We claim that $b\ne 1$. Otherwise, by \eqref{3.43}, $k=0$, which is a contradiction. Thus by \eqref{3.44},
\begin{equation}\label{3.47}
1+a_1^2+a_1b+a_1b^2+b^4=0.
\end{equation}
If $1+a_1^2+b+b^2+b^3=0$, then this equation and \eqref{3.47} together imply $b(1+b)=0$, which is a contradiction. Hence $1+a_1^2+b+b^2+b^3\ne 0$. Now by \eqref{3.43},
\begin{align*}
\text{Tr}_{q/2}(k)=\,&\text{Tr}_{q/2}\Bigl(\frac{1+a_1^4+a_1^4b^2+a_1^2b^3+a_1^2b^7+b^8}{a_1^2(1+a_1^2+b+b^2+b^3)^2}\Bigr)\cr
=\,&\text{Tr}_{q/2}\Bigl(\frac{(1+b)^8+a_1^4(1+b)^2+a_1^2b^3(1+b)^4}{a_1^2(a_1^2+(1+b)^3)^2}\Bigr)\cr
=\,&\text{Tr}_{q/2}\Bigl(\frac{(1+b)^2(a_1^4+(1+b)^6)}{a_1^2(a_1^2+(1+b)^3)^2}+\frac{b^3(1+b)^4}{(a_1^2+(1+b)^3)^2}\Bigr)\cr
=\,&\text{Tr}_{q/2}\Bigl(\frac{1+b}{a_1}+\frac{b^3(1+b)^4}{b^2(1+b)^2(1+b^2+a_1)^2}\Bigr)\cr
&(\text{by \eqref{3.47},}\ a_1^2+(1+b)^3=b(1+b)(1+b^2+a_1))\cr
=\,&\text{Tr}_{q/2}\Bigl(\frac{1+b}{a_1}+\frac{b(1+b)^2}{(1+b^2+a_1)^2}\Bigr)\cr
=\,&\text{Tr}_{q/2}\Bigl(\frac{1+b}{a_1}+\frac{1+b}{a_1}\Bigr)\cr
&(\text{by \eqref{3.47},}\ (1+b^2+a_1)^2=a_1b(1+b))\cr
=\,&0,
\end{align*}
which is a contradiction. This completes the proof of Proposition~\ref{P3.1}.
\end{proof}

\section{Proof of Theorem~\ref{T1.1}}

We now prove that the conditions (i) and (ii) in Theorem~\ref{T1.1} are necessary and sufficient for $f$ to be a PP of $\f_{q^2}$.

\subsection{Necessity}\

Since Theorem~\ref{T1.1} has been verified numerically for $q\le 2^7$ \cite{Tu-Zeng-Li-Helleseth-FFA-2018}, we assume that $q\ge 2^6$. By Proposition~\ref{P3.1}, $a\in\f_q^*$. Choose $k\in\f_q$ such that $\text{Tr}_{q/2}(k)=1$ and let $z\in\f_{q^2}$ be such that $z^2+z=k$. We will go through the computations in Section~3 again. However, since $a\in\f_q^*$, the computations are simpler.

For \eqref{3.2} and \eqref{3.3}, we have
\begin{align}\label{4.1}
A(X)=\,& (1+a+b) X^3+(a+z+a z+b z)X^2 +(1+a+k+a k+b k+z+a z+b z)X\\
& +a+k+b k+a z+b z+k z+a k z+b k z, \nonumber
\end{align}
\begin{align}\label{4.2}
B(X)=\,& (1+a+b) X^3+ (1+b+z+a z+b z)X^2+ (b+k+a k+b k+z+a z+b z)X\\
&+b+a k+a z+b z+k z+a k z+b k z. \nonumber
\end{align}
For \eqref{3.6} -- \eqref{3.9},
\begin{align}
C_3\,&=1+a+b,\label{4.3}\\
C_2(Y)\,&=1+b+(1+a+b)Y,\label{4.4}\\
C_1(Y)\,&=b+(1+a+b)k+(1+a+b)Y,\label{4.5}\\
C_0(Y)\,&=b+ak+(a+b+k+ak+bk)Y.\label{4.6}
\end{align}
Note that $C_3\ne 0$ since $bX^3+X+a$ has no root in $\mu_{q+1}$. For \eqref{3.21} -- \eqref{3.23},
\begin{align}
E_2\,&=1+a^2+b^2,\label{4.7}\\
E_1\,&=1+a^2+b^2,\label{4.8}\\
E_0\,&=ab+k+a^2k+b^2k.\label{4.9}
\end{align}
For \eqref{s3-F4} -- \eqref{s3-F0},
\begin{align}
F_4=\,& a+a^2+a^3+a^4+b+a^2 b+b^2+a b^2+b^3+b^4,\label{4.10}\\
F_3=\,& 1+a^2+b+a^2 b+b^2+b^3,\label{4.11}\\
F_2=\,& a^3+a^4+b+b^2+b^3+b^4,\label{4.12}\\
F_1=\,& a+b+a b+b^3+k+a^2 k+b k+a^2 b k+b^2 k+b^3 k,\label{4.13}\\
F_0=\,& b+b^2+a b^2+a^2 b^2+a k+b k+a b^2 k+b^3 k+a k^2+a^2 k^2 \label{4.14}\\
&+a^3 k^2+a^4 k^2+b k^2+a^2 b k^2+b^2 k^2+a b^2 k^2+b^3 k^2+b^4 k^2. \nonumber
\end{align}
Equation \eqref{3.37} becomes ``$0=0$'', but \eqref{3.38} becomes
\begin{equation}\label{4.15}
(1+b)^3(1+a+b)(a^2+b+b^2)=0.
\end{equation}
Therefore $b=1$ or $a^2=b(b+1)$. By \eqref{4.7} and \eqref{4.8}, $E_1E_2\ne 0$, and by \eqref{3.34}, 
\begin{align}\label{4.16}
0\,&=\text{Tr}_{q/2}\Bigl(\frac{E_2^2D_1^2+E_1E_2^2D_1+F_3^2+E_1E_2F_3+E_1^2F_4}{E_1^2E_2^2}\Bigr)\cr
&=\text{Tr}_{q/2}\Bigl(\frac{D_1^2}{E_1^2}+\frac{D_1}{E_1}+\frac{F_3^2}{E_1^2E_2^2}+\frac{F_3}{E_1E_2}+\frac{F_4}{E_2^2}\Bigr)\cr
&=\text{Tr}_{q/2}\Bigl(\frac{F_4}{E_2^2}\Bigr).
\end{align}
By \eqref{4.7} and \eqref{4.10}, it is easy to check that 
\begin{equation}\label{4.17}
\frac{F_4}{E_2^2}=1+\frac 1{1+a+b}.
\end{equation}
Thus if $b=1$, we have
\[
0=\text{Tr}_{q/2}\Bigl(1+\frac 1a\Bigr),
\]
and if $a^2=b(b+1)$, we have
\[
0=\text{Tr}_{q/2}\Bigl(1+\frac 1{1+a+b}\Bigr)=\text{Tr}_{q/2}\Bigl(1+\frac 1{1+a^2+b^2}\Bigr)=\text{Tr}_{q/2}\Bigl(1+\frac 1{1+b}\Bigr)=\text{Tr}_{q/2}\Bigl(\frac b{1+b}\Bigr).
\]

\subsection{Sufficiency}\

We use the notation of Subsection 4.1. By Proposition~\ref{P2.1}, it suffices to prove the following claims.

\medskip
{\bf Claim 1.} $bX^3+X+a$ has no root in $\mu_{q+1}$.

\medskip
{\bf Claim 2.} For each $y\in\f_q$, there is a unique $x\in\f_q$ such that
\[
g\Bigl(\frac{x+z+1}{x+z}\Bigr)=\frac{y+z+1}{y+z}.
\]
By the computations in Section~3, Claim 2 is implied by the following two claims.

\medskip
{\bf Claim 2.1.} If $y\in\f_q$ is a root of $C_1C_2+C_0C_3$, it is also a root of $C_2^2+C_1C_3$.

\medskip
{\bf Claim 2.2.} There exists $D\in\f_q[Y]$ such that \eqref{3.16} holds.

\medskip
{\it Proof of Claim 1.} Assume to the contrary that there exists $x\in\mu_{q+1}$ such that 
\begin{equation}\label{4.18}
bx^3+x+a=0.
\end{equation}
Then
\begin{equation}\label{4.19}
ax^3+x^2+b=x^3(bx^3+x+a)^q=0.
\end{equation}
Eliminating the $x^3$ terms using \eqref{4.18} and \eqref{4.19}, we have $bx^2+ax+a^2+b^2=0$, i.e., 
\begin{equation}\label{4.20}
\Bigl(\frac{bx}a\Bigr)^2+\frac{bx}a+\frac{b(a^2+b^2)}{a^2}=0.
\end{equation}
On the other hand, it follows from (i) and (ii) of Theorem~\ref{T1.1} that 
\[
\text{Tr}_{q/2}\Bigl(\frac{b(a^2+b^2)}{a^2} \Bigr)=0.
\]
Thus by \eqref{4.20}, $bx/a\in\f_q$, i.e., $x\in\f_q$, whence $x=1$, which is impossible since $1+a+b\ne 0$.

\medskip
{\it Proof of Claim 2.1.} In fact, we have
\begin{align*}
C_1C_2+C_0C_3\,&=ab+(1+a^2+b^2)(Y^2+Y+k),\cr
C_2^2+C_1C_3\,&=1+b+ab+(1+a^2+b^2)(Y^2+Y+k).
\end{align*}
If $b=1$, we have $C_1C_2+C_0C_3=C_2^2+C_1C_3$. If $b\ne 1$, $\text{Tr}_{q/2}(b/(b+1))=0$ and $a^2=b(b+1)$, we have
\begin{align*}
\text{Tr}_{q/2}\Bigl(\frac{ab}{1+a^2+b^2}\Bigr)\,&=\text{Tr}_{q/2}\Bigl(\frac{ab}{1+b}\Bigr)=\text{Tr}_{q/2}\Bigl(a+\frac a{1+b}\Bigr)\cr
&=\text{Tr}_{q/2}\Bigl(\frac{a^2}{(1+b)^2}\Bigr)=\text{Tr}_{q/2}\Bigl(\frac b{1+b}\Bigr)=0.
\end{align*}
Hence $C_1C_2+C_0C_3$ has no root in $\f_q$.

\medskip
{\it Proof of Claim 2.2.} By \eqref{4.7} and \eqref{4.8}, $E_1E_2\ne 0$. By \eqref{4.17} and (i), (ii) of Theorem~\ref{T1.1}, we have 
\[
\text{Tr}_{q/2}\Bigl(\frac{F_4}{E_2^2}\Bigr)=0.
\]
It follows that there exists $D_1\in\f_q$ satisfying \eqref{3.34}. Let 
\[
D_2=\frac{E_2D_1+F_3}{E_1},\quad D_0=\frac{E_0D_1+F_1}{E_1}.
\]
Then \eqref{3.29} -- \eqref{3.33} are satisfied. Thus \eqref{3.16} holds with $D=D_2Y^2+D_1Y+D_0$. 

This completes the proof of Theorem~\ref{T1.1}.

\medskip
\noindent
{\bf Remark.} If there is an easy way to show that $a\in\f_q^*$, the proof of Theorem~\ref{T1.1} would be simplified significantly.

\section*{Appendix}

In \eqref{3.20},
\begin{align}\label{s3-F4}\tag{A1}
F_4=\,&a_1+b+b^2+a_1 b^2+b^3+b^4+a_1 b^4+b^5+b^6+a_1 b^6+b^7+b^8\\ 
&+(a_1^3 +a_1^2 b+a_1^3 b^4 +a_1^2 b^5)k+(a_1^4+a_1^5+a_1^4 b+a_1^4 b^2+a_1^5 b^2+a_1^4 b^3)k^2\cr
&+(a_1^7 +a_1^6 b)k^3+a_1^8 k^4,\nonumber
\end{align}
\begin{align}\label{s3-F3}\tag{A2}
F_3=\,& 1+a_1^2+a_1^3+a_1^4+b+a_1 b+a_1^4 b+b^2+a_1 b^2+a_1^3 b^2+b^3+b^4+a_1^2b^4\\
&+b^5+a_1 b^5+b^6+a_1 b^6+b^7+(a_1^2 +a_1^4 +a_1^5 +a_1^2 b +a_1^4 b^2 +a_1^2 b^4+a_1^2b^5)k\cr
&+(a_1^4 +a_1^4 b +a_1^5 b +a_1^4 b^2 +a_1^5 b^2 +a_1^4 b^3)k^2+(a_1^6 +a_1^6 b)k^3,
\nonumber
\end{align}
\begin{align}\label{s3-F2}\tag{A3}
F_2=\,& b+a_1 b+a_1^3 b+a_1^4 b+b^2+a_1 b^2+a_1^3 b^2+a_1^4 b^2+b^3+a_1^2 b^3+a_1^3b^3+b^4\\
&+a_1^2 b^4+a_1^3 b^4+b^5+a_1 b^5+a_1^2 b^5+b^6+a_1 b^6+a_1^2 b^6+b^7+b^8\cr
&+(a_1^3 +a_1^4 +a_1^3b +a_1^3 b^2 +a_1^5 b^2 +a_1^3 b^3 +a_1^4 b^3)k\cr
&+(a_1^4 +a_1^4 b +a_1^4 b^2 +a_1^6 b^2+a_1^4 b^3)k^2+a_1^7 k^3+a_1^8 k^4,
\nonumber
\end{align}
\begin{align}\label{s3-F1}\tag{A4}
F_1=\,& a_1+a_1^2+a_1^3+a_1^4+b+a_1 b+a_1^3 b+a_1^4 b+a_1 b^2+a_1^2
b^2+b^3\\
&+a_1^2 b^3+a_1^3 b^3+a_1 b^4+a_1^3 b^4+b^5+a_1 b^5+a_1^2 b^5+a_1 b^6+b^7\cr
&+(1+a_1^3 +a_1^4 +a_1^5 +b +a_1b +a_1^3 b +a_1^4 b +b^2 +a_1 b^2 +a_1^4 b^2\cr
& +b^3 +a_1^3 b^3 +a_1^4 b^3 +b^4 +a_1^3 b^4 +b^5
+a_1 b^5 +b^6 +a_1 b^6 +b^7)k\cr
&+(a_1^2 +a_1^4 +a_1^2 b +a_1^4 b +a_1^4 b^2 +a_1^4
b^3 +a_1^2 b^4 +a_1^2 b^5)k^2\cr
&+(a_1^4 +a_1^6 +a_1^4 b +a_1^5 b +a_1^4 b^2 +a_1^5
b^2 +a_1^4 b^3)k^3+(a_1^6 +a_1^6 b)k^4,
\nonumber
\end{align}
\begin{align}\label{s3-F0}\tag{A5}
F_0=\,& b+a_1^4 b+b^2+a_1^4 b^2+b^5+b^6+(a_1 +a_1^2 +a_1^3 +a_1^4 +b +a_1b +a_1^3 b +a_1^4 b\\
& +a_1^2 b^2 +a_1^5 b^2 +b^3 +a_1^2 b^3 +a_1^3 b^3 +a_1 b^4 +a_1^3 b^4 +b^5 +a_1b^5 +a_1^2 b^5 +b^7)k\cr
&+(a_1 +a_1^2 +a_1^4 +a_1^5 +b +a_1^3 b +a_1^4 b +a_1^5b +b^2 +a_1 b^2 +a_1^3 b^2 +a_1^4 b^2 +a_1^6 b^2\cr
& +b^3 +a_1^3 b^3 +a_1^4 b^3 +b^4 +a_1
b^4 +a_1^2 b^4 +a_1^3 b^4 +b^5 +b^6 +a_1 b^6 +b^7 +b^8)k^2\cr
&+(a_1^3 +a_1^5 +a_1^2 b +a_1^4 b +a_1^5 b^2 +a_1^4 b^3 +a_1^3 b^4 +a_1^2 b^5)k^3\cr
&+(a_1^4 +a_1^5 +a_1^6+a_1^4 b +a_1^4 b^2 +a_1^5 b^2 +a_1^4 b^3 )k^4+(a_1^7 +a_1^6 b )k^5+a_1^8 k^6.
\nonumber
\end{align}
In \eqref{3.39} and \eqref{3.40},
\begin{align}\label{s3-h1}\tag{A6}
h_1=\,& 1+a_1^4+a_1^2 b+a_1^4 b^2+a_1^2 b^5+b^8+(a_1^2 +a_1^6 +a_1^2 b^2 +a_1^2b^4 +a_1^2 b^6 )k\\
&+(a_1^4 +a_1^6 b +a_1^4 b^4 )k^2+(a_1^6 +a_1^6 b^2 )k^3,
\nonumber
\end{align}
\begin{align}\label{s3-h2}\tag{A7}
h_2=\,& a_1^2+a_1^6+b+a_1^4 b+a_1^2 b^2+a_1^6 b^2+b^3+a_1^4 b^3+a_1^2 b^4+a_1^4b^5+a_1^2 b^6+a_1^4 b^7+b^9\\
&+b^{11}+(a_1^4 +a_1^8 +a_1^4 b^4 )k+(a_1^2 +a_1^6 +a_1^4 b +a_1^8 b +a_1^6 b^4+a_1^4 b^5 +a_1^2 b^8 )k^2\cr
&+(a_1^4 +a_1^8 +a_1^4 b^2 +a_1^8 b^2 +a_1^4 b^4 +a_1^4 b^6 )k^3+(a_1^6
+a_1^8 b^3 +a_1^6 b^4 )k^4+(a_1^8 +a_1^8 b^2 )k^5.
\nonumber
\end{align}
In \eqref{3.44},
\begin{align}\label{s3-d1}\tag{A8}
d_1=\,& a_1^2+a_1^6+a_1^{10}+a_1^{14}+b+a_1^4 b+a_1^8 b+a_1^{12} b+a_1^4
b^3+a_1^{12} b^3+a_1^2 b^4+a_1^{10} b^4+b^5+a_1^8 b^5\\
&+a_1^6 b^6+a_1^8 b^7+a_1^6 b^8+a_1^8 b^{11}+a_1^6 b^{14}+a_1^2 b^{16}+b^{17}+a_1^4b^{17}+a_1^4 b^{19}+a_1^2 b^{20}+b^{21}\cr
&+(a_1^2 b +a_1^6 b +a_1^{10} b +a_1^{14} b +a_1^2 b^3 +a_1^{10} b^3 +a_1^{10}b^7 +a_1^{10} b^9 +a_1^2 b^{17} +a_1^6 b^{17} +a_1^2 b^{19} )k\cr
&+(a_1^2 +a_1^6 +a_1^{10} +a_1^{14}+a_1^2 b^2 +a_1^6 b^2 +a_1^{10} b^2 +a_1^{14} b^2 +a_1^8 b^3 +a_1^{12} b^3 +a_1^6 b^4 +a_1^{10}b^4 \cr
&+a_1^{12} b^5 +a_1^6 b^6 +a_1^6 b^8 +a_1^6 b^{10} +a_1^{10} b^{10} +a_1^8 b^{11} +a_1^6 b^{12} +a_1^6b^{14} +a_1^2 b^{16} +a_1^2 b^{18} )k^2\cr
&+(a_1^{10} b^3 +a_1^{14} b^3 +a_1^{10} b^5 +a_1^{10} b^7 +a_1^{10} b^9 )k^3\cr
&+(a_1^6+a_1^{14} +a_1^6 b^2 +a_1^{14} b^2 +a_1^6 b^4 +a_1^6 b^6 +a_1^6 b^8 +a_1^6 b^{10} +a_1^6
b^{12} +a_1^6 b^{14} )k^4,
\nonumber
\end{align}
\begin{align}\label{s3-d2}\tag{A9}
d_2=\,& 1+a_1^4+a_1^8+a_1^{12}+a_1^2 b+a_1^{10} b+b^2+a_1^8 b^2+a_1^6 b^3+a_1^{10}b^3\\
&+a_1^8 b^6+a_1^8 b^8+a_1^6 b^{11}+b^{16}+a_1^4 b^{16}+a_1^2 b^{17}+b^{18}\cr
&+(a_1^4 b +a_1^{12} b +a_1^4 b^3 +a_1^8 b^3
+a_1^4 b^5 +a_1^8 b^5 +a_1^4 b^7\cr 
&+a_1^8 b^7 +a_1^4 b^9 +a_1^8 b^9 +a_1^4 b^{11} +a_1^4 b^{13} +a_1^4
b^{15} )k\cr
&+(a_1^4 +a_1^{12} +a_1^4 b^2 +a_1^{12} b^2 +a_1^4 b^4 +a_1^4 b^6 +a_1^4 b^8 +a_1^4
b^{10} +a_1^4 b^{12} +a_1^4 b^{14} )k^2.
\nonumber
\end{align}




\begin{thebibliography}{99}

\bibitem{Bartoli-arXiv:1712.10017}
D. Bartoli, {\it On a conjecture about a class of permutation trinomials}, arXiv:1712.10017.

\bibitem{Gupta-Sharma-FFA-2016}
R. Gupta and R. K. Sharma, {\it Some new classes of permutation trinomials over finite fields with even characteristic}, Finite Fields Appl. {\bf 41} (2016), 89 -- 96.

\bibitem{Hou-CM-2015} X. Hou, {\it A survey of permutation binomials and trinomials over finite fields}, Proceedings of the 11th International Conference on Finite Fields and Their Applications, Magdeburg, Germany, 2013, Contemporary Mathematics {\bf 632}, 177 -- 191, 2015.
 
\bibitem{Hou-FFA-2015a} X. Hou, {\it Permutation polynomials over finite fields --- a survey of recent advances}, Finite Fields Appl. {\bf 32} (2015), 82 -- 119.

\bibitem{Hou-FFA-2015b} 
X. Hou, {\it Determination of a type of permutation trinomials over finite fields, II}, Finite Fields Appl. {\bf 35} (2015), 16 -- 35.

\bibitem{Li-Helleseth-CC-2017}
N. Li and T. Helleseth, {\it Several classes of permutation trinomials from Niho exponents}, Cryptogr. Commun. {\bf 9} (2017), 693 -- 705.

\bibitem{Li-Qu-Chen-FFA-2017}
K. Li, L. Qu, X. Chen, {\it New classes of permutation binomials and permutation trinomials over finite fields}, Finite Fields Appl. {\bf 43} (2017), 69 -- 85. 

\bibitem{Park-Lee-BAMS-2001} Y. H. Park and J. B. Lee, {\it Permutation polynomials and group permutation polynomials}, Bull. Austral. Math. Soc. {\bf 63} (2001), 67 -- 74.

\bibitem{Stichtenoth-1993}
H. Stichtenoth, {\it Algebraic Function Fields and Codes},  Springer, Berlin, 1993.

\bibitem{Tu-Zeng-Li-Helleseth-FFA-2018}
Z. Tu, X. Zeng, C. Li, T. Helleseth, {\it A class of new permutation trinomials}, Finite Fields Appl. {\bf 50} (2018), 178 -- 195.

\bibitem{Wang-LNCS-2007}
Q. Wang, {\it Cyclotomic mapping permutation polynomials over finite fields}, in {\it Sequences, Subsequences, and Consequences}, S.W. Golomb, G. Gong, T. Helleseth, H.-Y. Song, (Eds.), pp. 119 -- 128, Lecture Notes in Comput. Sci., vol. 4893, Springer, Berlin, 2007.

\bibitem{Williams-JNT-1975}
K. S. Williams, {\it Note on cubics over $\text{GF}(2^n)$ and $\text{GF}(3^n)$}, J. Number Theory {\bf 7} (1975), 361 -- 365.

\bibitem{Wu-Yuan-Ding-Ma-FFA-2017}
D. Wu, P. Yuan, C. Ding, Y. Ma, {\it Permutation trinomials over $\Bbb F_{2^m}$}, Finite Fields Appl. {\bf 46} (2017), 38 -- 56.

\bibitem{Zha-Hu-Fan-FFA-2017}
Z. Zha, L. Hu, S. Fan, {\it Further results on permutation trinomials over finite fields with even characteristic}, Finite Fields Appl. {\bf 45} (2017), 43 -- 52.

\bibitem{Zieve-PAMS-2009}
M. E. Zieve, {\it On some permutation polynomials over $\Bbb F_q$  of the form $x^r h(x^{(q-1)/d})$}, Proc. Amer. Math. Soc. {\bf 137} (2009), 2209 -- 2216.

\bibitem{Zieve-arXiv1310.0776}
M. E. Zieve, {\it Permutation polynomials on $\Bbb F_q$ induced from R\'edei function bijections on subgroups of $\Bbb F_q^*$}, arXiv:1310.0776.

\end{thebibliography}
\end{document}